\newtheorem{theorem}{Theorem}
\newtheorem{proposition}{Proposition}
\newtheorem{remark}{Remark}
\newtheorem{definition}{Definition}
\newtheorem{example}{Example}
\DeclareMathOperator{\intr}{int}
\DeclareMathOperator{\argmin}{argmin}
\newcommand{\lang}{\langle}
\newcommand{\rang}{\rangle}
\newcommand{\R}{\mathbb R}
\newcommand{\N}{\bf N}
\newcommand{\M}{\bf M}
\newcommand{\K}{\bf K}
\newcommand{\bv}{\bigvee}
\newenvironment{proof}{{\noindent\bf Proof.}}{\hfill$\Box$\\}
\begin{document}

\title{Characterization of the $\sigma$-Dedekind complete Riesz space
by the subadditivity of its positive part mapping}
\author{A. B. N\'emeth\\Faculty of Mathematics and Computer Science\\Babe\c s 
Bolyai University, Str. Kog\u alniceanu nr. 1-3\\RO-400084
Cluj-Napoca, Romania\\email: nemab@math.ubbcluj.ro}
\date{}

\maketitle

\begin{abstract} 

Two retractions $M$  and $N$ on convex cones $\M$ and
respectively $\N$ of a  real vector space $X$
are called mutually polar if $M+N=I$ and $MN=NM=0.$
In this note it is shown, that if the cones $\M$ and $\N$ are generating, 
$\sigma$-monotone complete, $M$ and $N$ are $\sigma$-monotone continuous,
then the subadditivity of $M$ and $N$ (with respect to the order relation induced by
$\M$ and respectively $\N$)
imply that $\M$ and $\N$ are lattice cones. $(X,\M)$ and  $(X,\N)$
become $\sigma$-Dedekind complete Riesz spaces, $M$ and $-N$ are the positive part ,
respectively the negative part mappings in $(X,\M)$; $N$ and $-M$ are the positive
part, respectively the negative part mappings in $(X,\N)$.

\end{abstract}

\section{Introduction}

If the Euclidean space $\R^m$ is ordered by a \emph{simplicial cone} , i. e.
a cone engendered by $m$ linearly independent elements of $\R^m$, then the ordering
is a \emph{latticial} one, i.e. for every pair $x,\,y\in \R^m$ there exists 
$x\vee y= \sup\{x,y\}$ and $x\wedge y=\inf\{x,y\}$, and the \emph{lattice operations}
$\vee$ and $\wedge$ are continuous. The famous theorem of Youdine \cite{Youdine1939}
asserts the converse of the above obvious assertion: If $\R^m$ is endowed
with a \emph{latticial ordering} $\leq$, with continuous lattice operations $\vee$ and $\wedge$, then the 
positive cone $\K=\{x\in \R^m: \,0\leq x\}$ of this ordering is a simplicial one.
The proof is far to be simple. This circumstance motivates the term \emph{Youdine cone}
for the simplicial cone in $\R^m$.

(The classification of all finite dimensional lattice cones  was done in \cite{Nemeth2004},
yielding as particular case the proof of Youdine's theorem too.)

An infinite dimensional generalization of Youdine's theorem is meaningless since
the geometry of closed, infinite dimensional lattice cones is fairly diverse.

Our approach is hence different: it is characterized a large class of Riesz spaces
by a single property: the subadditivity of their positive part mappings.

\section{Basic terminology}\label{bterm}

The nonempty set $\K$ in the real vector space $ X$ is called a \emph{convex cone} if 
\begin{enumerate}
	\item[(i)] $\lambda x\in \K$, for all $x\in \K$ and $\lambda \in \R_+=[0,\infty)$ and if 
	\item[(ii)] $x+y\in \K$, for all $x,y\in \K$. 
\end{enumerate}

A convex cone $\K$ is called \emph{pointed} if $\K\cap(-\K)=\{0\}$. 

A convex cone is called \emph{generating} if $\K-\K$$=X$. 

If $X$ is a topological vector space then a closed, pointed generating convex cone in it is called \emph{proper}.

The \emph{relation $\leq$ defined by the pointed convex cone $\K$} is given by $x\leq y$ if and only if 
$y-x\in \K$. Particularly, we have $\K=\{x\in X:0\leq x\}$. The relation $\leq$ is an \emph{order relation}, that 
is, it is reflexive, transitive and antisymmetric; it is \emph{translation invariant}, that is, $x+z\leq y+z$, 
$\forall x,y,z\in X$ with $x\leq y$; and it is \emph{scale invariant}, that is, $\lambda x\leq\lambda y$, 
$\forall x,y\in X$ with $x\leq y$ and $\lambda \in \R_+$.

Conversely, for every $\leq$ scale invariant, translation invariant and antisymmetric order relation in $X$ there exists a pointed 
convex cone $\K$, defined by $\K=\{x\in X:0\leq x\}$, and such that $x\leq y$ if and only if $y-x\in \K$. The cone $\K$ 
is called the \emph{positive cone of $(X,\leq)$}. $(X,\leq)$ (or $(X,\K)$) is called an \emph{ordered vector space, shortly o.v.s.}; we use
also the notation $\leq =\leq_K.$ 

The mapping $R:(X,\leq)\to (X,\leq)$ is said to be \emph{isotone} if $x\leq y\,\Rightarrow Rx\leq Ry.$
$R$ is called \emph{subadditive} if it holds $R(x+y)\leq Rx+Ry$, for any $x,\,y\in X$.

The ordered vector space $(X,\leq)$ is called \emph{latticially  ordered} if for every  $x,y\in X$ there exists 
$x\vee y:=\sup\{x,y\}$. In this case the positive cone $\K$ is called a \emph{lattice cone}.

\vspace{1mm}

\emph{The lattice cone is pointed and generating.}

\vspace{1mm}

A latticially ordered vector space is called a \emph{Riesz space}. 
Denote $x^+=0\vee x$ and $x^-=0\vee (-x)$. Then, $x=x^+-x^-$. $x^+$ is 
called the \emph{positive part} of $x$ and $x^-$ is called the \emph{negative part} of $x$. 
 The mapping 
$x\to x^+$ is called the \emph{positive part mapping}.

\begin{remark}\label{latt}

Let $(X,\K)$ be a Riesz space. Then the following properties of the mappings 
$^+ $ and $^-$ are immediate  (see e.g. 
\cite{LuxemburgZaanen1971},\,\cite{Schaefer1974}.)

1. The mapping $^+$ is \emph{idempotent}, that is, $(x^+)^+=x^+,\,\forall\,x\in X$.
 
2. The mapping $^+$ is \emph{subadditive}, that is $(x+y)^+\leq x^++y^+,\,\forall\,x,\,y\in X.$

3. If $x^+=(-x)^+ =0 \,\Rightarrow \,x=0$, since $\K$ is pointed.

4.$^+$ is \emph{isotone}, i.e. $x\leq y\, \Rightarrow \,x^+\leq y^+$;

5. $x^+-x^-=x,\,\forall\,x\in X$.

6.$ (x^+)^-=(x^-)^+=0, \,\,\forall\,x\in X.$

\end{remark}

If $X$ is endowed with a
vector space topology, then the continuity of the positive
part mapping is equivalent with the closedness of its positive cone $\K$.

The closed, pointed cone $\K$$ \subset X$ with $X$ a Banach space is called \emph{normal} 
if from $x_n\in \K$, $x_n\to 0$ and
$0\leq y_n\leq x_n$ it follows $y_n\to 0.$ The  ordered Banach space with normal positive cone is called normal.

The cone $\K$ and the  ordered Banach space $(X,\K)$ is called \emph{regular} if every monotone order bounded
sequence in $\K$  is convergent.
 
Every closed pointed convex cone in a finite dimensional Banach space is normal and regular.

Every closed normal cone in a separable Hilbert space is regular. (\cite{Mcarthur1970})


\section{Order complete ordered vector spaces}

\begin{definition}\label{oc}
The o.v.s. $(X,\leq)$ is called \emph{$\sigma$-monotone complete o.v.s.} if 
every increasing sequence $\{a_n\}_{n=1}^\infty$ in $X$
that is bounded from above has a supremum $\bv _{n=1}^\infty a_n \in X.$

The o.v.s. $(X,\leq)$ is called  \emph{Dedekind complete 
($\sigma$-Dedekind complete) o.v.s.} if 
every set $S$ (denumerable set $S$) $ \subset X$
that is bounded from above has a supremum in $X$.

A Riesz space is $\sigma$-monotone complete if and only if
it is $\sigma$-Dedekind complete.

\end{definition}

The $\sigma$-monotone completeness for an o.v.s. is a very mild condition.
The notion occurs in Riesz space theory and is related among others to the monotone
convergence theorems (see Example \ref{lebesgues}). The o.v.s.-s
in this context are Riesz spaces, but among our Examples \ref{regul}, \ref{hilbsig},
\ref{finsig}, there exists examples in some
operator algebras, where the $\sigma$-monotone complete ordering is not a 
latticial one (see e.g. \cite{Wright1972}).

Our main result is meaningful also in the
context of the ordered regular Banach spaces.

\begin{example}\label{regul}

 Every ordered regular Banach space $(X,\K)$ with a closed positive cone $\K$ is $\sigma$-monotone
complete o.v.s. Indeed, each increasing  order bounded sequence $(a_n)_{n=1}^\infty$ in $X$
is convergent to $a\in X$, $a_n\leq a, \,\,\forall \,n\in \mathbf N$ and $a\leq w$
whenever $a_n\leq w,\,\forall\, n\in \mathbf N$ since $\K$ is closed.

It is $\sigma$-Dedekind complete if and only if it is a Riesz space too. 
 
\end{example}

\begin{example}\label{hilbsig}
 Every closed normal cone $\K$ in a real separable Hilbert space $H$ is regular (\cite{Mcarthur1970})  , hence
$(H,\K)$ is {$\sigma$-monotone complete o.v.s..}

\end{example}

\begin{example}\label{finsig}

 Every closed convex  pointed cone $\K$ in the $m$-dimensional
real Euclidean space $\R^m$ is normal, hence regular and hence $(\R^m,\K)$ is
\emph{$\sigma$-monotone complete o.v.s.}.

\end{example}

\begin{example}\label{lebesgues}

 Let $(X,\mathcal A,\mu)$ be a measure space. Denote with $L=L(X,\mathcal A, \mu)$
the vector space of $\mu$-integrable real-valued functions on $X$. Under the order relation
$f\leq g\,\Leftrightarrow f(x)\leq g(x)\, \mu-$almost everywhere ($\mu$-a.e.) in $X$,
$L$ becomes a Riesz space $(L,\leq)$. The positive cone $\K$ of this space is the
set of $\mu$-a.e. non-negative $\mu$-integrable functions on $X$.

From the Lebesgue domination convergence theorem (see e.g. \cite{Bauer2001}) it follows then that if $\{f_n\}_{n=1}^\infty$
is an $\leq$-increasing sequence in $\K$ with an upper bound, then $f_n(x)\to f(x)$ \, $\mu$-a.e. in $X$,
$f\in \K$ is the supremum of the set $\{f_n\}_{n=1}^\infty$. Hence $(L,\leq)$ is a $\sigma$-order complete
o.v.s. 

\end{example}

\begin{example}\label{lex}

Consider the o.v.s. $(\R^2,\leq)$, where $\leq$ is the \emph{lexicographic ordering},
that is the ordering defined by

$$y= (y^1,y^2) \leq (x^1,x^2)=x,\,\Leftrightarrow\, x^1>y^1,\,\textrm{ or } x^1=y^1 \,\textrm{ and }\, y^2\leq x^2.$$

$(\R^2,\leq)$ is a Riesz space which is not $\sigma$-order complete.

Indeed $\{a_n\}_{n=}^\infty$ with $a_n=(0,n)$ is an increasing sequence with each  $w=(w^1,w^2),\, w^1>0$
an upper bound, but no such $w $ can be a least upper bound. 

\end{example} 


\section{$\sigma$-monotone continuous mappings}

\begin{definition} \label{simon}
 
By convention introduced in the preceding section,
the symbol $\bv _{n=1}^\infty a_n$ means that it is the supremum of the 
increasing sequence $\{a_n\}_{n=1}^\infty$ of the o.v.s. $(X,\leq)$.
Bearing in mind this convention the mapping $T:(X,\leq) \to (X,\leq)$ is called
\emph{$\sigma$-monotone continuous } if 
\begin{equation}\label{simon1}
 \bv_{n=1}^\infty Ta_n= T(\bv_{n=1}^\infty a_n)
\end{equation}
whenever $\bv_{n=1}^\infty a_n$ exists. The formula implicitly assumes that $T$
is isotone on the ordered set $\{a_n\}_{n=1}^\infty$ and the increasing sequence $\{Ta_n\}_{n=1}^\infty$
has its supremum $T(\bv_{n=1}^\infty a_n)$.

\end{definition}

\begin{example}\label{regsig}
Let $(X,\leq)$ be an ordered regular vector space with closed positive cone.
 Then every isotone continuous mapping $T:X\to X$ is $\sigma$-monotone continuous. 
\end{example}

\begin{example}\label{lattsig}

Let $(X.\K)$ be a Riesz space . Denote with $M$ the
positive part mapping of this space. We know that M (Remark \ref{latt})
is isotone and idempotent. Assume that the supremum $a=\bv _{n=1}^\infty a_n$ 
exists for the increasing sequence $\{a_n\}$. Then we have
\begin{equation}\label{pozresz}
\bv_{n=1}^\infty Ma_n= M(\bv_{n=1}^\infty a_n)=Ma.
\end{equation}
Indeed, the sequence $(Ma_n)_{n=1}^\infty$ is increasing and
the element $Ma$ is an upper bound for this sequence by the isotonicity of $M$.
Let $w$ be arbitrary with $Ma_n\leq w,\,\forall\, n$. 
Then by the isotone and idempotent property of 
$M$ and the fact that $Ma,\,w\in \K$, it follows that $Ma_n\leq Ma \leq Mw=w ,\,\forall \,n\in \mathbf N$.
Hence $Ma$ is the supremum of the increasing sequence $(Ma_n)_{n=1}^\infty$.
We have shown that \emph{the positive part mapping in the Riesz space is
$\sigma$-monotone continuous.}

\end{example}


\section{Mutually polar retractions on cones}

\begin{definition}\label{muture}
Let $X$ be a vector space, $0$ its zero mapping and $I$ its identity mapping. 
The mappings $M,\,N:\,X\to X$ are called
\emph{mutually polar retractions} if 
\begin{enumerate}
\item [(i)] $\M: =$ $MX$ and $\N:=$ $NX$ are convex cones, 
\item[(ii)] $M+N=I$,
\item[(iii)] $MN=NM=0$.
\end{enumerate}
Denote the order relation induced by $\M$ (respectively by $\N$) with $\leq_M$
(respectively by $\leq_N$). 
\end{definition}

{\bf Convention: Any order theoretic consideration on a retract is with respect 
to the order induced by its range.}

\vspace{2mm}

For instance $M$ is \emph{isotone} if $x\leq_My\, \Rightarrow\, Mx\leq_MMy$, $N$
 is \emph{subadditive} means $N(x+y)\leq_N Nx+Ny,\, \forall\,x,y \in X$.

 We will say that $M$ and $N$ 
are \emph{polar of each other}.

\begin{example}\label{positp}

Let $(X,\K)$ be a Riesz space Then bearing in mind the Remark \ref{latt}
the mappings $M$ and $N$ defined by $Mx=x^+,\,Nx=-x^-,\,\forall \,x\in X$, 
are mutually polar retractions.

We have seen (Example \ref{lattsig}) that $M$ is $\sigma$-monotone continuous.
The same conclusion holds also for the -negative part mapping denoted with $N$.

We mention also that $\M:=$ $MX=\K,$ and $\N:=$ $ NX=-\K$, hence booth $\M$ and $\N$ are generator cones.

We shall use for  simplicity the term \emph{lattice retraction} for the positive part mapping with respect the
order relation defined by a lattice  cone.

\end{example}

\begin{example}\label{proj}

Let $\K$ be a closed convex cone in the separable real
Hilbert space $(H,\lang,\rang)$.
Then $\K^\circ$ defined by
$$\K^\circ=\{x\in H:\, \lang y,x\rang\leq 0,\,\forall \, y\in \K\},$$
is called the \emph{polar of} $\K$. The polar is a closed convex cone.

Let $P_K:H\to \K$ and $P_{K^\circ}:H\to \K^\circ$ be the \emph{projection mappings} onto the closed convex cone $\K$
and $\K^\circ$ respectively  
, that is, the mappings defined by  
$$P_Kx=\argmin \{\|x-y\|:y\in \K\} \,\textrm{ and }\,P_{K^\circ}x=\argmin \{\|x-y\|:y\in\K^\circ\}.$$ 

Moreau's theorem \cite{Moreau1962} asserts that $P_K$ and $P_{K^\circ}$ are 
mutually polar retractions.

\end{example}

\begin{example}\label{egydim} 
Let $X$ be a Banach space $\K$ $\subset X$ a proper cone and $y\in \intr \K$. Denote by $\varphi$ the Minkowski
functional of $\K$ with respect to its interior point $y$.
Then the mappings defined by
$$Mx=\varphi(x)y,\,\,\textrm{ and }\,\,Nx=x-\varphi(x)y,\, \forall \,x\in X$$
are mutually polar retractions \cite{Nemeth2021}.
\end{example}

\section{Main results}

For the general mappings $Q,\,R,\,S: \,X\to X$ on the vector space $X$ we adopt the conventions: 
$$(Q+R)S=QS+RS,\,\textrm{ but }\, Q(R+S)=QS+RS \,\textrm{ only if }\,Q \,\textrm{ is additive }.$$

\begin{proposition}\label{idemp} 

If $M$ and $N$ are mutually polar retractions,then
\begin{enumerate}
\item[(i)] $M$ and $N$ are idempotent, that is, $M^2=M$ and $N^2=N$.
\item[(ii)] $\M=$ $\{x\in X:\,Nx=0\}$ and $\N=$ $\{x\in X:\,Mx=0\}$.
\item[(iii)] If $X$ is a topological vector space and one of retractions is continuous,
 then so is the other and the convex cones $\M$ and $\N$ are closed.
\end{enumerate}
\end{proposition}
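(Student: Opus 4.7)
The plan is to deduce all three claims directly from the two defining algebraic identities $M+N=I$ and $MN=NM=0$, without invoking any additivity of $M$ or $N$ (which we are not assuming).

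For (i), I would evaluate $M+N=I$ at a point of the form $Mx$: namely $M(Mx)+N(Mx)=Mx$. Since $NM=0$, the second term vanishes and we get $M^2x=Mx$ for every $x\in X$. The same calculation with the roles of $M$ and $N$ swapped, using $MN=0$, gives $N^2=N$. The only thing to be careful about here is not to write $M(M+N)=M$ as if $M$ distributed over sums; instead, the identity $M+N=I$ is used pointwise.

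For (ii), note first that for any $x\in X$, $N(Mx)=(NM)x=0$, so $\M=MX\subseteq\{x\in X:Nx=0\}$. Conversely, if $Nx=0$, then by $M+N=I$ we have $x=Mx+Nx=Mx\in MX=\M$, giving the reverse inclusion. The characterization of $\N$ is obtained by interchanging $M$ and $N$.

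For (iii), assume $X$ is a topological vector space (so addition is continuous) and that $M$ is continuous. Then $N=I-M$ is continuous as a sum of continuous mappings. In a Hausdorff TVS the singleton $\{0\}$ is closed, so by (ii) the cones $\M=N^{-1}(\{0\})$ and $\N=M^{-1}(\{0\})$ are closed as preimages of a closed set under a continuous map. The symmetric case (assuming $N$ continuous) is identical.

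There is no real obstacle: each item reduces to a one-line manipulation of the two identities in Definition \ref{muture}, and the continuity/closedness claims reduce to (ii) combined with $N=I-M$. The main point to highlight in the writeup is simply that one must avoid treating $M$ and $N$ as linear, since the statement only assumes they are retractions onto convex cones.
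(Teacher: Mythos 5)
Your proof is correct and follows essentially the same route as the paper: part (i) is the same one-line computation (the paper writes it as $M^2=(I-N)M=M-NM=M$, using only right-composition over sums, which is exactly your pointwise evaluation of $M+N=I$ at $Mx$), and parts (ii) and (iii), which the paper dismisses as immediate, are spelled out by you in the obvious intended way. Your explicit remark that Hausdorffness is needed for $\{0\}$ to be closed in (iii) is a small point of care the paper leaves implicit.
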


\begin{proof}

(i) Since $M=(I-N)$ we have $M^2=(I-N)M=M-NM=M.$

The assertions (ii) and (iii) are immediate too.

\end{proof} 

\begin{proposition}\label{mn}
If for mutually polar subadditive retractions $M$ and $N$, their ranges
$\M$ and respectively $\N$ are both generating cones, then
$$\N=-\M.$$
\end{proposition}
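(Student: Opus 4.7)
The plan is to prove the two inclusions $-\mathbf{M}\subset\mathbf{N}$ and $-\mathbf{N}\subset\mathbf{M}$ separately, each via one application of the generating hypothesis on the \emph{opposite} cone together with the corresponding subadditivity. Throughout I will make free use of the facts already established in Section~5: $M$ and $N$ are idempotent with fixed-point sets $\mathbf{M}$ and $\mathbf{N}$ respectively, $Nm=0$ for $m\in\mathbf{M}$, and $Mn=0$ for $n\in\mathbf{N}$.

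For the first inclusion, fix $m\in\mathbf{M}$ and use that $\mathbf{N}$ is generating to write
$$-m=n_{1}-n_{2},\qquad n_{1},n_{2}\in\mathbf{N}.$$
Rearranging gives $m+n_{1}=n_{2}$, so the element $p:=m+n_{1}$ lies in $\mathbf{N}$. Apply $N$ to both sides: on the one hand $Np=p=n_{2}$, and on the other hand subadditivity of $N$ yields
$$Np=N(m+n_{1})\leq_{N}Nm+Nn_{1}=0+n_{1}=n_{1}.$$
Unwinding $\leq_{N}$, this means $n_{1}-n_{2}\in\mathbf{N}$, i.e.\ $-m\in\mathbf{N}$. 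Hence $-\mathbf{M}\subset\mathbf{N}$.

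The reverse inclusion $-\mathbf{N}\subset\mathbf{M}$ is obtained by the symmetric argument, exchanging the roles of $M$ and $N$ and using that $\mathbf{M}$ is generating: given $n\in\mathbf{N}$, write $-n=m_{1}-m_{2}$ with $m_{i}\in\mathbf{M}$, set $p=n+m_{1}=m_{2}\in\mathbf{M}$, and apply subadditivity of $M$ to obtain $m_{2}=Mp\leq_{M}Mn+Mm_{1}=m_{1}$, whence $-n=m_{1}-m_{2}\in\mathbf{M}$.

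The only real step is the one displayed above; everything else is bookkeeping with the defining identities $M+N=I$ and $MN=NM=0$. The only place I could see things going wrong is mis-handling the order $\leq_{N}$: one must remember that $a\leq_{N}b$ means $b-a\in\mathbf{N}$, not $b-a\in\mathbf{M}$, so that the subadditivity inequality for $N$ translates into the \emph{correct} membership $n_{1}-n_{2}\in\mathbf{N}$ that coincidentally equals $-m$. Once that is kept straight, the proof is a two-line calculation per direction.
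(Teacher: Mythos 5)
Your proof is correct and rests on the same mechanism as the paper's: decompose an element of one cone as a difference of elements of the other cone (using the generating hypothesis) and observe that the resulting subadditivity defect of the appropriate retraction is exactly the element you need to land in the right cone. The paper merely packages this as the set identities $\{Mx+My-M(x+y):x,y\in X\}=\M$ and $\{Nx+Ny-N(x+y):x,y\in X\}=\N$ together with the algebraic relation between these two sets coming from $N=I-M$; unwinding those identities yields precisely your two inclusions, e.g.\ your key display is the statement that $-m=Nm+Nn_1-N(m+n_1)$ lies in the $N$-defect set.
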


\begin{proof}
We see first that
\begin{equation}\label{m}
\{Mx+My-M(x+y):\, x,\,y \in X\}=\M
\end{equation}
and
\begin{equation}\label{n}
\{Nx+Ny-N(x+y):\, x,\,y \in X\}=\N.
\end{equation}

Take $x\in \M$ arbitrarily. Since $\N$ is generating, there  are the elements $u,\,v\in \N$
such that $x=u-v$. Hence
$$x=Mx=Mx+Mv-M(x+v),$$
since $v,\,x+v\in \N$. This proves  (\ref{m}) and by symmetry also (\ref{n}).

We have finally
\begin{eqnarray}\label{eee}
\nonumber\{Nx+Ny-N(x+y):\, x,\,y \in X\}=\{(I-M)x+(I-M)y-(I-M)(x+y):\, x,\,y \in X\}=\\
-\{Mx+My-M(x+y):\, x,\,y \in X\},
\end{eqnarray}

\vspace{2mm}

which together (\ref{m}) and (\ref{n}) prove our claim.

\end{proof}

\begin{remark}\label{eeee} 
It is important to observe that in the case of mutually 
polar retractions $M$ and $N$ the condition $\N=-\M$, via  the formula (\ref{eee})
implies that the subadditivity of one of the retractions imply the subadditivity of the other.
\end{remark}

\begin{proposition}\label{adiso}
Let $M$  and $N$ be  mutually polar subadditive retractions, with generating cones
$\M$ and $\N$. Then they are isotone too.
\end{proposition}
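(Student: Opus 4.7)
The plan is to show isotonicity of $M$ by writing the ``smaller'' argument as a sum involving the ``larger'' argument and an element of $\mathbf{N}$, then exploiting $M|_{\mathbf{N}}=0$.

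First I would record two ingredients already available. From Proposition \ref{idemp}(ii), $\mathbf{N}=\{x\in X:Mx=0\}$, and from Proposition \ref{mn} (which applies since $\mathbf{M}$ and $\mathbf{N}$ are generating) we have $\mathbf{N}=-\mathbf{M}$. Combined, these give the key fact that $z\in\mathbf{M}$ implies $M(-z)=0$.

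Now suppose $x\leq_{M} y$, i.e.\ $z:=y-x\in\mathbf{M}$. The naive move $y=x+z$ followed by subadditivity yields $My\leq_{M} Mx+Mz=Mx+z$, which is not what we want. Instead I would write
\[
x = y + (-z),
\]
and apply subadditivity of $M$:
\[
Mx \;=\; M\bigl(y+(-z)\bigr) \;\leq_{M}\; My + M(-z).
\]
Since $z\in\mathbf{M}$, the observation above gives $M(-z)=0$, so $Mx\leq_{M} My$, proving that $M$ is isotone.

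For $N$, I would invoke Remark \ref{eeee}: once the subadditivity of one polar retraction is at hand, the identity $\mathbf{N}=-\mathbf{M}$ together with formula (\ref{eee}) yields the subadditivity of the other, and the same argument (with $M$ and $N$ interchanged, using $\mathbf{M}=\{x:Nx=0\}$ and $\mathbf{M}=-\mathbf{N}$) shows that $N$ is isotone as well. The only delicate point is the choice of decomposition in the subadditivity step; everything else is immediate from Propositions \ref{idemp} and \ref{mn}.
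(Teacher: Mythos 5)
Your proposal is correct and is essentially the paper's own argument: the paper also decomposes the smaller element as (larger element) $+$ (an element of $\N$), applies subadditivity, and kills the second term via $Mv=0$ for $v\in\N$, invoking $\N=-\M$ from Proposition \ref{mn} to translate this into $\leq_M$-isotonicity. The only cosmetic difference is that the paper starts from an arbitrary $v\in\N$ and reads off which pairs $x,y$ the inequality covers, whereas you start from $x\leq_M y$ and build the same decomposition.
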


\begin{proof}
We prove the isotonicity of $M$.

Take $v\in \N$ and $x\in X$ arbitrarily. 

Then
$$Mx\leq_M M(x-v)+Mv = M(x-v).$$
Hence 
$$Mx\leq_M My, \, \textrm{ with } y=x-v,\, \textrm{ or whenever } x-y \in \N.$$

Since by Proposition \ref{mn}, $\N=-\M$
 $$Mx\leq_MMy, \, \textrm{ whenever } x-y\in -\M , \, \textrm{ or whenever }\, x\leq_My.$$
\end{proof}

\begin{proposition}\label{is}
Suppose that $M$ and $N$ are mutually polar $\sigma$-order continuous retractions with
generating $\sigma$-order complete ranges $\M$ and $\N$ respectively. If $M$ is $\leq_M$-isotone,
($N$ is $\leq_N$-isotone) then $\M$ ($\N$) is a lattice cone.
\end{proposition}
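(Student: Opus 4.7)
The plan is to exhibit $M$ itself as the positive part mapping of $(X, \leq_M)$: namely to prove that $Mx = \sup_{\leq_M}\{x, 0\}$ for every $x \in X$. Once this is established, translation invariance of $\leq_M$ yields $\sup_{\leq_M}\{x, y\} = y + M(x - y)$ for any pair $x, y \in X$, so $(X, \leq_M)$ is latticially ordered and $\M$ is a lattice cone. The parenthetical assertion about $N$ and $\N$ then follows by interchanging the roles of $M$ and $N$.

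To check that $Mx$ is a $\leq_M$-upper bound of $\{x, 0\}$, the inequality $0 \leq_M Mx$ is immediate from $Mx \in \M$, while $x \leq_M Mx$ reduces to $Mx - x = -Nx \in \M$, i.e.\ to the inclusion $\N \subseteq -\M$. For the least-upper-bound property, let $w \in X$ satisfy $0 \leq_M w$ and $x \leq_M w$: then $w \in \M$, so the idempotency of $M$ established in Proposition \ref{idemp} gives $Mw = w$, and the isotonicity hypothesis applied to $x \leq_M w$ yields $Mx \leq_M Mw = w$, as required. With $Mx = \sup_{\leq_M}\{x, 0\}$ in hand, the general pairwise supremum formula above follows by adding $y$ on both sides of the inequalities defining the upper-bound and least-upper-bound conditions for $x - y$.

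The one delicate step is $-Nx \in \M$, i.e.\ the identity $\N = -\M$. This is Proposition \ref{mn}, whose proof relies on subadditivity of the retractions; in the flow of the paper the identity is inherited from the preceding results and I would invoke it directly. Everything else in the argument is bookkeeping with the conventions $\M = MX$, $MN = NM = 0$, and the standing isotonicity hypothesis. It is worth noting that the $\sigma$-monotone continuity and $\sigma$-monotone completeness assumptions in Proposition \ref{is} are not actually consumed by this lattice-cone conclusion; they are kept in the hypotheses because the subsequent main theorem uses them to upgrade $(X, \M)$ to a $\sigma$-Dedekind complete Riesz space. The main obstacle to the proof, then, is really the presence (rather than the proof) of the symmetry $\N = -\M$; without it the upper-bound step fails, and one would be forced to construct $Mx$ as a monotone supremum via the $\sigma$-continuity hypotheses, a substantially more roundabout route.
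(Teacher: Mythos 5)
Your argument, as you partly concede yourself, proves a different statement from Proposition \ref{is}. The proposition does not assume subadditivity of the retractions, and it does not assume $\N=-\M$; the only order-theoretic hypothesis on $M$ is $\leq_M$-isotonicity. Your pivotal step $x\leq_M Mx$ is precisely the inclusion $\N\subseteq-\M$, and the only source for that identity in the paper is Proposition \ref{mn}, whose proof consumes the subadditivity of both retractions --- a hypothesis you do not have here. You cannot ``invoke it directly'' without silently strengthening the proposition. Nor is the inclusion automatic for mutually polar retractions: Moreau's pair $P_K$, $P_{K^\circ}$ (Example \ref{proj}) has ranges $\K$ and $\K^\circ$, and $\K^\circ=-\K$ only for self-dual $\K$. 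In $\R^2$ there are non-self-dual closed generating cones for which $P_K$ is isotone (the isotone projection cones of Isac and S.~Z.~N\'emeth); for such a cone every hypothesis of Proposition \ref{is} is met and the conclusion holds ($\K$ is simplicial, hence latticial), yet $P_Kx-x=-P_{K^\circ}x\notin\K$ in general, so $P_Kx$ is not even an upper bound of $\{x,0\}$ and $P_K$ is not the positive part mapping. Hence your identification $Mx=\sup_{\leq_M}\{x,0\}$ is false under the stated hypotheses, and your closing claim that the $\sigma$-completeness and $\sigma$-continuity assumptions are ``not actually consumed'' is wrong for the proposition as stated --- they are exactly what replaces the missing symmetry.

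The paper's proof is the ``substantially more roundabout route'' you mention, and it is unavoidable at this level of generality: for incomparable $u,v$ it sets $M_yx=y+M(x-y)$, iterates $G=M_u\circ M_v$ and $H=M_v\circ M_u$ to build increasing sequences $u_n$, $v_n$ dominated by every upper bound $w$ (since $Gw=Hw=w$), extracts $u^*=\bv_n u_n$ and $v^*=\bv_n v_n$ by $\sigma$-order completeness, passes to the limit with $\sigma$-order continuity to get $u^*=M_uv^*$ and $v^*=M_vu^*$, and finally deduces $u^*-v^*\in\N\cap(-\N)=\{0\}$, so $u^*=v^*=\sup\{u,v\}$. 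That said, your computation is correct and genuinely useful in the setting where subadditivity \emph{is} assumed, i.e.\ in the main theorem: there $\N=-\M$ is available from Proposition \ref{mn}, and your three lines ($0\leq_M Mx$; $x\leq_M Mx$ from $\N=-\M$; $Mx\leq_M Mw=w$ for any common upper bound $w\in\M$, by isotonicity and idempotency) yield at once that $\M$ is a lattice cone \emph{and} that $M$ is its lattice retraction, merging Propositions \ref{is} and \ref{lc} and bypassing the $\sigma$-hypotheses under those stronger assumptions. That is a worthwhile remark about the architecture of the theorem, but it is not a proof of Proposition \ref{is}.
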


\begin{proof} 

Suppose that $M$ is $\leq_M$-isotone. We put in this proof $\leq=\leq_M$ and suppose that all the order theoretic
 considerations in this proof are with respect to this relation.

 For the arbitrary fixed $y\in X$ we define $M_y:X\to X$ by $M_yx=y+M(x-y)$. Since $M$ is
	$\sigma$-order continuous and isotone, $M_y$ is also $\sigma$-order continuous and isotone. Moreover, $y\leq M_yx$.

We shall show that $\M$ is a lattice cone. That is , that for the arbitrary two elements $u$ and $v$
there exists $\sup \{u,v\}\in X$.
 If $u$ and $v$
	are comparable the statement is trivial. Suppose that they are not comparable. 
	First we remark that the set $\{u,v\}$ has an upper bound. Indeed, since $\M$ is generating, there
	exist $u_1,u_2,v_1,v_2\in \M$ such that $u=u_1-u_2$ and $v=v_1-v_2$. Hence, $u_1+v_1$ is an upper bound of
	the set $\{u,v\}$. Let $w$ be an arbitrary upper bound of the set $\{u,v\}$; that is, an arbitrary element
	of $(u+\M)\cap (v+\M)$. The mappings $M_u$ and $M_v$ are isotone. Moreover, $M_uw=u+M(w-u)=u+(w-u)=w$ and
	similarly $M_vw=w$. Consider the mappings $G=M_u\circ M_v$ 
	and $H=M_v\circ M_u$. They are isotone because $M_u$ and $M_v$ are. Moreover, $Gw=Hw=w$. 
	
	Denote $v_n=H^nv$, $u_1=M_uv$ and $u_n=G^{n-1}u_1$. We have $u\leq M_uv=u_1$. Also, $u\leq M_uv$ 
	implies $v\leq M_vu\leq M_v\circ M_uv=v_1$ and therefore $u_1=M_uv\leq M_uv_1$, or equivalently
	$u_1\leq M_u\circ M_v\circ M_uv=Gu_1=u_2$.
	Bearing in mind that $G,~H$ are isotone, 
	$Gw=Hw=w$, the relations
	\[v\leq v_1\leq\cdots\leq v_n\leq\cdots\leq w\] 
	and
	\[u\leq u_1\leq u_2\leq\cdots\leq u_n\leq\cdots\leq w\]
	can be verified by using mathematical induction. We further have
	\begin{eqnarray}
		\nonumber v_n=H^nv=(M_v\circ M_u)^nv=M_v\circ(M_u\circ M_v)^{n-1}\circ M_uv=
		M_v\circ G^{n-1}u_1\\
		=M_vu_n\label{vu}
	\end{eqnarray}
	and
	\begin{equation}\label{uv}
		u_{n+1}=Gu_n=M_u\circ M_vu_n=M_uv_n.     
	\end{equation}
	
	Since the sequences $\{u_n\}$ and $\{v_n\}$ are increasing and bounded above 
	by $w$, $u\leq u_n\leq w$ and $v\leq v_n\leq w$. From the $\sigma$-order completeness 
	of $\M$ and the $\sigma$-order continuity and isotonicity of $M_u$ and $M_v$ we have

	\begin{equation}\label{lim}
		u^*=\bv _{n=1}^\infty u_{n+1}=\bv_{n=1}^\infty M_uv_n=M_u(\bv _{n=1}^\infty v_n)=M_uv^*
		~\textrm{ and }
		\end{equation}
	\begin{equation}\label{lim1}
		v^*=\bv_{n=1}^\infty v_n=\bv_{n=1}^\infty M_vu_n=M_v(\bv _{n=1}^\infty u_n)=M_vu^*
	\end{equation}
	such that 
	\begin{equation}\label{leq}
		u\leq u^*\leq w~\textrm{ and }~v\leq v^*\leq w.
	\end{equation}
	Hence, we have $v^*-u^*+u^*-v=M(u^*-v)$ and $u^*-v^*+v^*-u=M(v^*-u)$, or 
	equivalently $(I-M)a=Na=c\in \N$ and $(I-M)b=Nb=-c$, where $a=u^*-v$, $b=v^*-u$ and $c=u^*-v^*$. Thus, we must 
	have $u^*-v^*=c=0.$ Hence $u^*=v^*=\sup \{u,v\}$.
\end{proof}

We remind the usage of the term \emph{lattice retraction} for the positive part mapping 
of the order relation induced by a lattice cone.

\begin{proposition}\label{lc}
Suppose that $M$ and $N$ are mutually polar retractions with the ranges $\M$
and respectively $\N$. If $\N=-\M$, $\M$ is a lattice  cone and $M$ is $\leq_M$-isotone,
then $M$ is the lattice retraction of $\M$.
\end{proposition}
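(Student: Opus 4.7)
The plan is to verify directly that $Mx$ equals the supremum $\sup_{\leq_M}\{0,x\}$ for every $x\in X$; since the lattice retraction of $\M$ is by definition the positive part mapping $x\mapsto x\vee_M 0$ in the Riesz space $(X,\M)$, this identity is exactly the claim.

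First I would check that $Mx$ is an upper bound of $\{0,x\}$ in $\leq_M$. The inequality $0\leq_M Mx$ is immediate since $Mx\in\M$. For $x\leq_M Mx$, the decomposition $x=Mx+Nx$ (from $M+N=I$) gives $Mx-x=-Nx$; because $Nx\in\N$ and we are assuming $\N=-\M$, we have $-Nx\in\M$, hence $x\leq_M Mx$.

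Next I would establish minimality. Let $w$ be an arbitrary upper bound of $\{0,x\}$ with respect to $\leq_M$, so $w\in\M$ and $x\leq_M w$. By Proposition \ref{idemp}, $M$ is idempotent, and since $w\in\M=MX$ we get $Mw=w$. Now apply the assumed $\leq_M$-isotonicity of $M$ to $x\leq_M w$: this yields $Mx\leq_M Mw=w$. Hence $Mx$ is the least upper bound of $\{0,x\}$, which shows $Mx=x\vee_M 0$ and concludes the proof.

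I do not foresee any real obstacle; the three ingredients—the decomposition $x=Mx+Nx$ together with $\N=-\M$, the idempotence of $M$ on its range, and the isotonicity hypothesis—each enter the argument in a transparent way. The only bookkeeping step worth emphasising is that membership $w\in\M$ forces $Mw=w$, which is what lets isotonicity collapse the upper-bound inequality into the desired minimality statement.
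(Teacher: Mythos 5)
Your proof is correct. It differs from the paper's in packaging rather than in substance: the paper introduces the lattice retraction $L$ of $\M$ (the map $x\mapsto x\vee_M 0$) as a second operator and proves $Mx=Lx$ by a two-sided comparison, using the isotonicity of $M$ together with $M(Lx)=Lx$ to get $Mx\leq_M Lx$, and the isotonicity of $L$ together with $x\leq_M Mx$ and $L(Mx)=Mx$ to get $Lx\leq_M Mx$. You instead verify directly that $Mx$ satisfies the defining property of $\sup_{\leq_M}\{0,x\}$: upper bound via $Mx\in\M$ and $Mx-x=-Nx\in-\N=\M$, and minimality via idempotence on the range ($Mw=w$ for $w\in\M$) plus the assumed isotonicity. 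The ingredients are identical --- the decomposition $x=Mx+Nx$ with $\N=-\M$, idempotence, and isotonicity --- but your version is slightly more self-contained, since it does not need to quote the isotonicity of $L$ or the inequality $x\leq_M Lx$ as known facts about Riesz spaces; the paper's version makes the statement ``$M$ coincides with the positive part mapping'' visually explicit by naming that mapping. Both are sound.
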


\begin{proof}
Let $L$ be the lattice retraction of the lattice cone $M$. Then $L$ is $\leq_M$-isotone
and $x\leq_M Lx\in \M$ for any $x\in X$. Since $M$ is $\leq_M$-isotone we have
\begin{equation}\label{is1}
Mx\leq_MM(Lx)=Lx, \,\,\forall \,\, x\in X.
\end{equation}

For any $x\in X$ , 
$x-Mx=Nx\in \N=-\M$ and hence $x\leq_MMx$. By the $\leq_M$-isotonicity of $L$
$$Lx\leq_ML(Mx)=Mx,\, \forall \, x\in X,$$
which together with (\ref{is1}) proves the lemma.
\end{proof}

Gathering the results in the preceding propositions we conclude the following 

\begin{theorem}
 Let $M$ and $N$ be mutually polar retractions, let the cones $\M$ $=Mx$ and $\N$ $=NX$ be 
$\sigma$-monotone complete and generating convex cones, and let $M$ and $N$ be $\sigma$-monotone continuous.
Then the subadditivity of $M$ and $N$
imply that $\M$ and $\N$ are lattice cones, $(X,\M)$ and  $(X,\N)$
become $\sigma$-monotone complete Riesz spaces, $M$ and $-N$ are the positive part ,
respectively the negative part mappings in $(X,\M)$; $N$ and $-M$ are the positive
part, respectively the negative part mappings in $(X,\N)$.

\end{theorem}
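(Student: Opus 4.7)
The plan is to recognize that the theorem is an assembly of the propositions already established in this section; no genuinely new argument is needed, because the chain of implications runs cleanly from the stated hypotheses.

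First I would invoke Proposition \ref{mn}: the subadditivity of the mutually polar retractions $M$ and $N$, combined with the fact that both ranges $\M$ and $\N$ are generating, yields $\N=-\M$. Next, Proposition \ref{adiso} promotes subadditivity to isotonicity, so that $M$ is $\leq_M$-isotone and, by the symmetric role played by $N$, also $N$ is $\leq_N$-isotone.

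With isotonicity in hand, Proposition \ref{is} applies directly and delivers the main structural fact: since $M$ and $N$ are mutually polar and $\sigma$-monotone continuous, with generating $\sigma$-monotone complete ranges, and $M$ is $\leq_M$-isotone, $\M$ is a lattice cone. Exchanging the roles of $M$ and $N$ (the symmetry is legitimate because $\M=-\N$ holds simultaneously), the same proposition gives that $\N$ is a lattice cone as well. Together with the $\sigma$-monotone completeness of the positive cones and the Riesz-space equivalence recalled in Definition \ref{oc}, this upgrades $(X,\M)$ and $(X,\N)$ to $\sigma$-Dedekind complete Riesz spaces.

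To conclude, I would apply Proposition \ref{lc} to the pair $(M,N)$: its three hypotheses ($\N=-\M$, $\M$ a lattice cone, $M$ is $\leq_M$-isotone) are now all available, so $M$ is the lattice retraction of $\M$, i.e.\ the positive part mapping on $(X,\M)$. From the identity $x=Mx+Nx$ and the Riesz-space decomposition $x=x^+-x^-$ it then follows that the negative part mapping on $(X,\M)$ sends $x$ to $Mx-x=-Nx$, which identifies $-N$ as the negative part mapping. An entirely symmetric application of Proposition \ref{lc} to the pair $(N,M)$ identifies $N$ as the positive and $-M$ as the negative part mapping on $(X,\N)$. The real obstacle of the paper has already been overcome inside Proposition \ref{is}, whose iterative construction of $\sup\{u,v\}$ through the $\sigma$-monotone continuous isotone maps $M_u$ and $M_v$ is where the completeness hypothesis does its work; the present theorem is pure bookkeeping built on top of that result.
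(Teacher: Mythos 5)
Your proposal is correct and follows essentially the same route as the paper: the paper's own proof is exactly this assembly of Propositions \ref{mn}, \ref{adiso}, \ref{is} and \ref{lc} in the same order, with the identification of $-N$ as the negative part mapping following from $M+N=I$. No discrepancies to report.
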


\begin{proof}

We will work throughout in the proof under the assumptions: $M:X\to X$ and $N:X\to X$ are mutually polar retractions
with convex cone ranges $\M=$ $MX$ and $\N=$ $NX$. Then:

1. $M$ and $N$ are idempotent , $\M=$ $\{x\in X:\,Nx=0\}$, $\N=b$ $\{x\in X:\,Mx=0\}$ 
from Proposition \ref{idemp}.

2. If $M$ and $N$ are subadditive and $\M$ and $\N$ are generating, then $\N=-\M$ by 
Proposition \ref{mn}.

3. If $\N=-\M$ and $M$ and $N$ are subadditive, then they are isotone by
Proposition \ref{adiso}.

4. If $\M$ ($\N$) is $\sigma$-monotone complete and $M$ ($N$) is $\sigma$-monotone
continuous, and $\N=-\M$, then $\M$ ($\N$) is a lattice cone from Proposition \ref{is}.

5. If $\N=-\M$ and $\M$ ($\N$) is a lattice cone, then $M$ ($N$) is a lattice retraction
by Proposition \ref{lc}.

It also follows that $(X,\M)$ ($(X,\N)$) is a Riesz space with $M$ ($N$)
the positive part mapping, $-N$  ($-M$) the negative part mapping.

\end{proof}

Since the $\sigma$-monotone order continuity of the positive part mapping
and the minus negative part mapping 
in the general Riesz space (see Example \ref{lattsig} and Example \ref{positp})
, 
and the fact that the lattice cone is generating, we conclude that these
and the above theorem show that {\bf the only property of the subadditivity of the 
positive part mapping characterizes the $\sigma$-Dedekind complete Riesz space.}
(Compare also with Remark \ref{eeee}. ) 

\section{Remarks and comments}

1. The condition in the above theorem , that $\M$ and $\N$ are both generating cones (which is used
in Propositions \ref{mn}, \ref{adiso} and \ref{lc}) is essential.

Indeed, the mutually polar retractions $M$ and $N$ in the Example \ref{egydim} are both
subadditive (\cite{Nemeth2021}) , but $\M$ is one dimensional, hence if $\dim X\geq 2$ it is not generating,
hence not a lattice cone,
$\N$ is a proper cone, hence in general it is not a lattice cone.

2. In case of the metric projections $P_K$ and $P_{K^\circ}$ on the closed cone $\K$ and on its polar $\K^\circ$, of the
Hilbert space $H$, (Example \ref{proj}) the isotonicity of one of them implies the subadditivity of the other and converse.
(\cite{Nemeth2012}). In the case $H=\R^m$  booth $\K$ and $\K^\circ$ are simplicial cones of special forms.
In this case $P_K$ and $P_{K^\circ}$ are both subadditive if and only if $\K$ is
the positive orthant $\R^m_+$ of a Catesian reference system in $\R^m$. (They are in fact the lattice mappings on $\R^m_+$
and $-\R^m_+$ respectvely.)

3. The single place where $\sigma$ completeness and $\sigma$-order continuity is used 
is Proposition \ref{is}. The proof of this proposition follows and generalizes the proof
of S. Z. N\'emeth of his theorem in \cite{Nemeth2010}. (There was used for the  first time
in the literature the abstract notion of the retraction on a convex cone.)

4. The idea of the present note emerges from some investigations regarding the
so called \emph{asymmetric vector norms} in \cite{NemethNemeth2020}, \cite{Nemeth2020}
and \cite{Nemeth2021}.



\vspace{3mm}

\end{document}